\newtheorem{theorem}{Theorem}[section]
\theoremstyle{definition}
\newtheorem{example}{Example}
\newtheorem{question}[theorem]{Question}
\newtheorem{remark}[theorem]{Remark}
\newtheorem{problem}[theorem]{Open Problem}
\title{Monoids $\mathrm{Mon}\langle a,b:a^{\alpha}b^{\beta}a^{\gamma}b^{\delta}=b\rangle$ admit finite complete rewriting systems}
\author{Alan J. Cain}
\address{Centro de Matematica, Universidade do Porto, Rua do Campo Alegre 687,
4169--007 Porto, PORTUGAL}
\email{\texttt{ajcain@fc.up.pt}}
\author{Victor Maltcev}
\address{Department of Mathematics and Statistics, Sultan Qaboos University, Al-Khodh 123, Muscat, 
Sultanate of OMAN}
\email{\texttt{victor.maltcev@gmail.com}}
\begin{document}

\begin{abstract}
We prove that every monoid $\mathrm{Mon}\langle
a,b:a^{\alpha}b^{\beta}a^{\gamma}b^{\delta}=b\rangle$ admits a finite
complete rewriting system. Furthermore we prove that
$\mathrm{Mon}\langle a,b:ab^2a^2b^2=b\rangle$ is non-hopfian,
providing an example of a finitely presented non-residually finite
monoid with linear Dehn function.
\end{abstract}

\keywords{One-relator monoids, word problem, rewriting systems, residual finiteness, hopficity}

\maketitle

\section{Introduction}

The solubility of the word problem for one-relator monoids is a
long-standing open question. In a series of papers by Sergei Adian and
his students it was proved that the word problem for one-relator
monoids can be reduced to the cases $\mathrm{Mon}\langle
a,b:aUb=bVb\rangle$ and $\mathrm{Mon}\langle a,b:aUb=b\rangle$; we
refer the reader to the very nice survey~\cite{Adian} and references
therein. The methods of Adian's school is mostly combinatorics on
words, and sometimes the proofs using these methods can become quite
technically involved. On the other hand, Louxin Zhang showed
in~\cite{Zhang} how powerful the tools of rewriting systems can be in
trying to prove that the word problem for one-relator semigroups is
decidable. A remarkable paper of Yuji Kobayashi~\cite{Kobayashi}
showed that every one-relator monoid satisfies the condition FDT, and
since every monoid presented by a finite complete rewriting system
satisfies FDT, it prompted Kobayashi to ask:

\begin{problem}
Does every one-relator monoid admit a finite complete rewriting system?
\end{problem}

The aim of this note is to show that monoids $\mathrm{Mon}\langle
a,b:a^{\alpha}b^{\beta}a^{\gamma}b^{\delta}=b\rangle$ admit finite
complete systems, see Section~\ref{sec:cs}. Notice that these monoids
fall within one of the two important classes identified by Adian's
school. After that, in Section~\ref{sec:hopf}, we will prove that
$\mathrm{Mon}\langle a,b:ab^2a^2b^2=b\rangle$ is non-hopfian. This
gives an example of a non-residually finite finitely presented monoid
with linear Dehn function. This is significant because the analogous
question for finitely presented groups with linear Dehn function,
which are of course the hyperbolic groups, is an important open
problem. Finally, in Section~\ref{sec:final} we will state our
feelings about general monoids $\mathrm{Mon}\langle a,b:aUb=b\rangle$
and pose some questions.

\section{Preliminaries}

By a \emph{rewriting system} $(A,R)$ we mean a finite alphabet $A$ and a subset $R\subseteq A^{\ast}\times A^{\ast}$, where $A^{\ast}$ stands for the free monoid over $A$. Every pair $(l,r)$ from $R$ is called a \emph{rule} and normally is written as $l\to r$. For $x,y\in A^{\ast}$ we write $x\to y$, if there exist $\alpha,\beta\in A^{\ast}$ and a rule $l\to r$ from $R$ such that $x=\alpha l\beta$ and $y=\alpha r\beta$. Denote by $\to^{\ast}$ the transitive reflexive closure of $\to$. A rewriting system $(A,R)$ is called
\begin{itemize}
\item
\emph{confluent} if for every words $w,x,y\in A^{\ast}$ such that $w\to^{\ast}x$ and $w\to^{\ast}y$, there exists $W\in A^{\ast}$ such that $x\to^{\ast}W$ and $y\to^{\ast}W$;
\item
\emph{terminating} if there is no infinite derivation $x_0\to x_1\to x_2\to\cdots$.
\end{itemize}
Confluent terminating rewriting systems, which are also called
\emph{complete systems}, give a very convenient way of working with
finitely generated monoids. For, if a monoid is presented by
$M=\mathrm{Mon}\langle A:l_i=r_i\quad i\in I\rangle$ and it turns that
$S=(A,\{l_i\to r_i\}_{i\in I})$ is complete, then the elements of $M$
are in bijection with the \emph{normal forms} for $S$, i.e. those
words from $A^{\ast}$ which do not include any subword $l_i$, and to
find the normal form for a word $w\in A^{\ast}$, we just need to apply
the relation $\to$ successively to $w$ as many times as we can (this
process must stop by the termination condition) and the result will
always be the same word depending only on the element of $M$ that $w$
represents.

We refer the reader to the monograph of Ronald Book and Friedrich
Otto~\cite{Otto} for more background information on rewriting systems.

Let us provide our two final definitions. Let $\mathrm{Mon}\langle
A:R\rangle$ be a finite presentation for a monoid $M$. For two words
$x,y\in A^{\ast}$, equal in $M$, denote by
\begin{itemize}
\item
$d(x,y)$ the minimal number of relations from $R$ that need to be applied
  to obtain $x$ from $y$.
\item
$s(x,y)$ the least possible value of $\sup\{|w_i|:0\leq i\leq k\}$ for
  all derivations $x=w_0\sim w_1\sim\cdots\sim w_k=y$, where $p\sim q$
  stands for applying a single relation from $R$.
\end{itemize}
Then
\begin{equation*}
\mathbf{d}_n(M)=\sup\{d(x,y):x,y\in A^{\ast},~x=_My,~|x|_A,|y|_A\leq n\}
\end{equation*}
is called the \emph{Dehn function} of $M$, and
\begin{equation*}
\mathbf{sp}_n(M)=\sup\{s(x,y):x,y\in A^{\ast},~x=_My,~|x|_A,|y|_A\leq n\}
\end{equation*}
is called the \emph{space function} of $M$.

\section{Finite Complete Systems}\label{sec:cs}

\begin{theorem}\label{th:abab}
Every monoid $M=\mathrm{Mon}\langle a,b:a^{\alpha}b^{\beta}a^{\gamma}b^{\delta}=b\rangle$ admits a finite complete system.
\end{theorem}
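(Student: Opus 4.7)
The plan is to start from the obvious length-reducing rule
\[
R_1 : a^\alpha b^\beta a^\gamma b^\delta \to b
\]
and, depending on the exponents, either show that $\{R_1\}$ is already complete or perform a small Knuth--Bendix style completion. I would use the length-lexicographic order with $a < b$; termination of any system built from length-decreasing or length-preserving-lex-decreasing rules is then immediate.

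I would first dispose of the degenerate cases where one or more of $\alpha,\beta,\gamma,\delta$ is zero; in each such case the relator collapses so that $\{R_1\}$ (suitably re-read) is already complete, either because the reduced left-hand side has no self-overlap or because the monoid becomes a simpler, well-understood one. For the main case all exponents positive, critical pairs of $R_1$ with itself come entirely from \emph{borders} of $W = a^\alpha b^\beta a^\gamma b^\delta$, i.e.\ words which are simultaneously a nontrivial proper prefix and suffix of $W$. A direct letter-block analysis, splitting the range of a potential border length $k$ according to which of the four blocks its endpoints lie in, shows that a border can exist only when $\alpha \le \gamma$ and $\delta \le \beta$, in which case it is unique and equal to $a^\alpha b^\delta$. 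Hence if $\alpha > \gamma$ or $\delta > \beta$, $R_1$ has no critical pairs and $\{R_1\}$ alone is a finite complete system, and the theorem is immediate.

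In the remaining subcase $\alpha \le \gamma,\ \delta \le \beta$, the unique border produces the critical pair
\[
a^\alpha b^\beta a^{\gamma-\alpha} b \;=\; b^{\beta-\delta+1} a^\gamma b^\delta \quad \text{in } M,
\]
prompting the length-preserving, lex-decreasing rule
\[
R_2 : b^{\beta-\delta+1} a^\gamma b^\delta \to a^\alpha b^\beta a^{\gamma-\alpha} b.
\]
Guided by the toy instance $\alpha=\beta=\gamma=\delta=1$, where the complete system turns out to be $\{abab\to b,\ bab\to abb,\ aabb\to b\}$, I would additionally include the strictly length-reducing ``sorted'' rule
\[
R_3 : a^{\alpha+\gamma} b^{\beta+\delta} \to b,
\]
which is valid as an equality in $M$ (derivable using $R_1$ and $R_2$ as two-sided equalities, via the pattern $a \cdot abb = a \cdot bab = abab = b$) but is not reachable by rewriting from them alone. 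The candidate system is then $\{R_1, R_2, R_3\}$, and the remaining work is to verify confluence by enumerating the finitely many critical pairs among these rules (self-overlaps of each rule, and overlaps between each ordered pair) and checking that each resolves to a common word.

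The main obstacle I anticipate is the critical-pair bookkeeping in this second subcase: one must show that none of the pairs arising from $R_2$ on $R_1$, $R_2$ on $R_2$, $R_2$ on $R_3$, $R_3$ on $R_1$, and so on, spawn any further irreducible disagreements that would force additional rules. The substantive content of the theorem is precisely that the completion halts at these three rules, reflecting the rigid algebraic structure forced by the single relation $a^\alpha b^\beta a^\gamma b^\delta = b$; conceptually, the sorted rule $R_3$ simultaneously closes off all the overlap types that $R_2$ might create when it runs into $R_1$.
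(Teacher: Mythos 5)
Your reduction to the self-overlap case $\alpha\le\gamma$, $\delta\le\beta$ and your critical pair $b^{\beta-\delta+1}a^{\gamma}b^{\delta}=a^{\alpha}b^{\beta}a^{\gamma-\alpha}b$ agree with the paper, but the argument breaks at the next step: your rule $R_3$ is not a relation of $M$ in general, so the proposed system presents a different monoid. Take $\alpha=1$, $\beta=\gamma=\delta=2$, i.e.\ $M=\mathrm{Mon}\langle a,b:ab^2a^2b^2=b\rangle$, which lies in your remaining subcase. The paper gives a finite complete system for this $M$ over $\{a,b,x\}$ with $x=a^2b^2$ and rules $ax^2b\to x$, $ab\to x^2$, $x^2bx\to b$, $x^2b^2\to bxbx$; in that system $a^{\alpha+\gamma}b^{\beta+\delta}=a^3b^4$ rewrites as $a^3b^4\to a^2x^2b^3\to axb^2$, a normal form distinct from $b$, so $a^3b^4\neq b$ in $M$. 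Your justification of $R_3$ (the pattern $a\cdot abb=a\cdot bab=abab=b$) uses $\alpha=\gamma$ and $\beta=\delta$ and does not generalize.

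Even where $R_3$ happens to hold, the claim that completion halts at $\{R_1,R_2,R_3\}$ fails. Take $aba^2b=b$ ($\alpha=\beta=\delta=1$, $\gamma=2$); here $a^3b^2=b$ is indeed valid, but the left-hand side of $R_2$, namely $ba^2b$, occurs as a factor of $aba^2b$, and rewriting there gives $aba^2b\to a\cdot abab=a^2bab$, which is irreducible for $\{aba^2b\to b,\ ba^2b\to abab,\ a^3b^2\to b\}$; thus $b$ and $a^2bab$ are two distinct normal forms of the same element, and further rules are forced. This inclusion overlap of $b^{\beta-\delta+1}a^{\gamma}b^{\delta}$ inside $a^{\alpha}b^{\beta}a^{\gamma}b^{\delta}$ is exactly what drives the paper's proof: writing the relator as $a^pb^{q+s}a^{r+pk}b^s$ ($p,s,k\ge1$, $q\ge0$, $0\le r<p$), the completed systems in the cases $s=1$ or $r>0$ consist of $k+1$ rules, a family whose size grows with $k$, while in the cases $s>1$, $r=0$ the paper has to adjoin a new generator $x=a^{pk}b^s$ (a Tietze transformation) to obtain a finite complete system at all. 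So the substantive content you would need to supply, namely that completion over $\{a,b\}$ terminates in your three rules, is both unproved and false; the missing ideas are the parametrization of the relator by $p,q,r,s,k$ and, in half the cases, the change of generating set.
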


\begin{proof}
If there are no overlaps of the word
$a^{\alpha}b^{\beta}a^{\gamma}b^{\delta}$ with itself, then
\begin{eqnarray*}
a^{\alpha}b^{\beta}a^{\gamma}b^{\delta} &\to& b
\end{eqnarray*}
is a complete rewriting system for $M$. The word
$a^{\alpha}b^{\beta}a^{\gamma}b^{\delta}$ only overlaps with itself
when $\beta\geq\delta$ and $\gamma\geq \alpha$. Thus we may
assume that $a^{\alpha}b^{\beta}a^{\gamma}b^{\delta}\equiv
a^{p}b^{q+s}a^{r+pk}b^{s}$ where $p,s,k\geq 1$, $q\geq 0$ and $0\leq
r<p$.

\medskip

\noindent\textbf{Case 1: $s=1$}

\medskip

Overlapping $a^pb^{q+1}a^{r+pk}b\to b$ with itself, we obtain a new rule $a^pb^{q+1}a^{r+p(k-1)}b\to b^{q+1}a^{r+pk}b$. Then successively overlapping the newly obtained rules with the initial one, we obtain the following finite complete system for $M$:
\begin{eqnarray*}
a^pb^{q+1}a^{r+pk}b &\to& b\\
a^pb^{q+1}a^{r+pi}b &\to& b^{q+1}a^{r+p(i+1)}b,\quad 0\leq i\leq k-1.
\end{eqnarray*}

\medskip

\noindent\textbf{Case 2: $s>1$ and $r>0$}

\medskip

By the same tactics as in \textbf{Case 1}, we obtain the following finite complete system for $M$:
\begin{eqnarray*}
a^pb^{q+s}a^{r+pk}b^s &\to& b\\
a^pb^{q+s}a^{r+pi}b &\to& b^{q+1}(a^{r+pk}b^{q+2s-1})^{k-1-i}a^{r+pk}b^s,\quad 0\leq i\leq k-1.
\end{eqnarray*}

\medskip

\noindent\textbf{Case 3: $s>1$, $r=0$ and $k=1$}

\medskip

It is easy to see that $M$ admits the following finite complete system:
\begin{eqnarray*}
a^pb^s &\to& x\\
xb^qx &\to& b\\
xb^{q+1} &\to& b^{q+1}x.
\end{eqnarray*}

\medskip

\noindent\textbf{Case 4: $s>1$, $r=0$ and $k\geq 2$}

\medskip

We have the relation $a^pb^{q+s}a^{pk}b^s=b$. We add a new letter $x=a^{pk}b^s$ and then $a^pb^{q+s}x=b$.

Now, $a^{p(k-1)}b=a^{pk}b^{q+s}x=xb^qx$, and so $\underline{a^pxb^qxb^{s-1}=x}$. Since $a^pb\cdot b^{q+s-1}x=b$, we have that $\underline{a^pb}=a^{p(k-1)}b\cdot (b^{q+s-1}x)^{k-2}=\underline{xb^qx(b^{q+s-1}x)^{k-2}}$. Then
\begin{equation*}
\underline{b}=a^pb^{q+s}x=xb^qx(b^{q+s-1}x)^{k-2}\cdot b^{q+s-1}x=\underline{xb^qx(b^{q+s-1}x)^{k-1}}.
\end{equation*}
This yields
\begin{eqnarray*}
\underline{xb^qx(b^{q+s-1}x)^{k-2}b^{q+s}} &=& xb^qx(b^{q+s-1}x)^{k-2}b^{q+s-1}\cdot xb^qx(b^{q+s-1}x)^{k-1}\\
&=& xb^qx(b^{q+s-1}x)^{k-1}\cdot b^qx(b^{q+s-1}x)^{k-1}\\
&=& \underline{b^{q+1}x(b^{q+s-1}x)^{k-1}}.
\end{eqnarray*}
The underlined relations give us the following rewriting system, defining $M$:
\begin{eqnarray*}
a^pxb^qxb^{s-1} &\to& x\\
a^pb &\to& xb^qx(b^{q+s-1}x)^{k-2}\\
xb^qx(b^{q+s-1}x)^{k-1} &\to& b\\
xb^qx(b^{q+s-1}x)^{k-2}b^{q+s} &\to& b^{q+1}x(b^{q+s-1}x)^{k-1}.
\end{eqnarray*}
If $q<s-1$, one readily checks that this system is confluent and terminating (regardless whether $k>2$ or $k=2$).

If $q\geq s-1$, then
\begin{equation*}
a^pxb^{q+1}=a^pxb^q\cdot xb^qx(b^{q+s-1}x)^{k-1}=xb^{q-(s-1)}x(b^{q+s-1}x)^{k-1},
\end{equation*}
and adding the rule
\begin{equation*}
a^pxb^{q+1}\to xb^{q-(s-1)}x(b^{q+s-1}x)^{k-1}
\end{equation*}
to the system, we obtain the required finite complete system.
\end{proof}

\section{Non-Hopfian Example}\label{sec:hopf}

\begin{example}\label{ex:hopf}
The monoid $M=\mathrm{Mon}\langle a,b:ab^2a^2b^2=b\rangle$ is non-hopfian.
\end{example}

\begin{proof}
Our example falls within Case~4 of the proof of
Theorem~\ref{th:abab}. By letting $x=a^2b^2$, we obtain the following
complete system for $M$:
\begin{eqnarray*}
ax^2b &\to& x\\
ab &\to& x^2\\
x^2bx &\to& b\\
x^2b^2 &\to& bxbx.
\end{eqnarray*}
Consider the assignment $a\mapsto a$ and $b\mapsto bab$. Since
\begin{eqnarray*}
a(bab)^2a^2(bab)^2 &\to& x^2\cdot x^2bx^2\cdot ax^2\cdot x^2bx^2\\
&\to& x^2bx\cdot ax^2b\cdot x\\
&\to& bx^2
\end{eqnarray*}
and $bab\to bx^2$, we have that the assignment lifts to a homomorphism. Under this homomorphism $ab^2$ maps to 
\begin{equation*}
abab^2ab\to x^2\cdot x^2bx^2\to x^2bx\to b,
\end{equation*}
and so the homomorphism is surjective. If this homomorphism were
bijective, then we would have that the inverse of this homomorphism
would be a homomorphism given by $a\mapsto a$ and $b\mapsto ab^2$. But
under this assignment the relation $ab^2a^2b^2=b$ does not hold, for:
\begin{eqnarray*}
a\cdot ab^2ab^2a^2\cdot ab^2ab^2 &=& a^2b^2\cdot ab^2\cdot a\cdot a^2b^2\cdot ab^2\\
&=& xab^2axab^2\\
&\to& x^3bax^3b,
\end{eqnarray*}
which does not reduce to $ab^2=x^2b$. Thus $M$ is non-hopfian.
\end{proof}

\begin{remark}
Malcev's Theorem asserts that every finitely presented residually
finite semigroup is hopfian. Thus the monoid $M$ from
Example~\ref{ex:hopf} is non-residually finite. It also follows
immediately from the complete system for $M$ that $M$ has linear Dehn
function.

On the other hand, for groups, it is still an important open question
whether every hyperbolic group is residually finite. (Finitely
presented groups with linear Dehn function are hyperbolic; see
\cite{gromov_hyperbolic}.)
\end{remark}

\section{Remarks and Questions}
\label{sec:final}

We have proved that every monoid $\mathrm{Mon}\langle
a,b:a^{\alpha}b^{\beta}a^{\gamma}b^{\delta}a^{\varepsilon}b^{\varphi}=b\rangle$
admits a finite complete system and will shortly make the proof
available as a preprint. The proof of this result, in comparison to
that of Theorem~\ref{th:abab}, is already very technical and gives
little hope that it is possible to prove that every monoid
$\mathrm{Mon}\langle a,b:aUb=b\rangle$ admits a finite complete system
just by straightforward method. Yet, analysing the cases appearing in
that proof, and looking at the proof of Theorem~\ref{th:abab}, we
noticed that the one-relator monoids under consideration have at most
quadratic Dehn functions and linear space functions. This prompts us
to raise
\begin{problem}
Is it true that
\begin{enumerate}[(1)]
\item
every monoid $\mathrm{Mon}\langle a,b:aUb=b\rangle$ has at most quadratic Dehn function?
\item
every monoid $\mathrm{Mon}\langle a,b:aUb=b\rangle$ has linear space function?
\end{enumerate}
\end{problem}
The reader may wish to consult a brilliant paper of Victor
Guba~\cite{Guba} on some other possible approaches how to deal with monoids
$\mathrm{Mon}\langle a,b:aUb=b\rangle$.

Another question we were trying to settle is whether every monoid
$\mathrm{Mon}\langle
a,b:a^{\alpha}b^{\beta}a^{\gamma}b^{\delta}=b\rangle$ admits a
\emph{length-non-increasing} finite complete system (that is, where
the rewriting rules $l \to r$ are all such that $|l| \geq |r|$). Using
Knuth--Bendix completion in GAP, we have thus far eliminated all our
suspected counterexamples, so we simply ask the general question:

\begin{question}
Do all monoids $\mathrm{Mon}\langle
a,b:a^{\alpha}b^{\beta}a^{\gamma}b^{\delta}=b\rangle$ admit
length-non-increasing finite complete systems?
\end{question}

Note that it follows from the results of G\"{u}nther Bauer and
Friedrich Otto~\cite{Bauer} that there do exist monoids admitting
finite complete systems but not admitting finite complete system which
do not increase the lengths.

\end{document}